\newtheorem{thm}{Theorem}[section]
\newtheorem{lem}[thm]{Lemma}
\newtheorem{prop}[thm]{Proposition}
\theoremstyle{definition}
\newtheorem{rem}[thm]{Remark}
\numberwithin{equation}{thm}
\begin{document}
\title[ idempotent completion of $n$-ANGULATED CATEGORIES  ]
{ idempotent completion of $n$-ANGULATED CATEGORIES}

\author{Zengqiang Lin}
\address{ School of Mathematical sciences, Huaqiao University,
Quanzhou\quad 362021,  China.} \email{zqlin@hqu.edu.cn}

\thanks{This work was supported  by the Science Foundation of Huaqiao University (Grants No. 2014KJTD14)}

\subjclass[2010]{18E30}

\keywords{  $n$-angulated category; idempotent completion; mapping cone.}

\begin{abstract}
 We show that the idempotent completion of an $n$-angulated category  admits a unique $n$-angulated structure such that the inclusion is an $n$-angulated functor, which satisfies a universal property.
\end{abstract}

\maketitle

\section{Introduction}
Let $n$ be an integer greater than or equal to three. In 2013, Geiss, Keller and Oppermann  introduced the notion of $n$-angulated categories to axiomatize the properties of some $(n-2)$-cluster tilting subcategories of  triangulated categories. By definition, an $n$-angulated category is an additive category $\mathcal{C}$ equipped with an automorphism $\Sigma$ of $\mathcal{C}$ and a class $\Theta$ of $n$-$\Sigma$-sequences which satisfies four axioms, denoted (N1)-(N4) (see \cite[Definition 2.1]{[GKO]}). When $n=3$, an $n$-angulated category is nothing but a triangulated category, where the four axioms are denoted by (TR1)-(TR4) respectively. On \cite[Theorem 1]{[GKO]}, there is a standard construction of $n$-angulated categories. Other examples of $n$-angulated categories can be found in \cite{[BJT],[L]}.

The first aim and motivation of this note is to get more examples of $n$-angulated categories. We want to construct new $n$-angulated categories from known ones. It has been proved by Balmer and Schlichting that the idempotent completion of a triangulated category admits a natural triangulated structure \cite[Theorem 1.5]{[BS]}. The second motivation of this note is to extend \cite[Theorem 1.5]{[BS]} from 3 to $n$. We will show that the idempotent completion of an $n$-angulated category admits a unique $n$-angulated structure such that the inclusion is an $n$-angulated functor, which satisfies a universal property; see Theorem \ref{thm}.

Now we recall some facts used in the proof of \cite[Theorem 1.5]{[BS]}. For (TR1)(c), given an endomorphism $(p,q,r)$ of a triangle $X_\bullet$, if $p^2=p$ and $q^2=q$, then we can use the method of lifting idempotents to find an endomorphism $(p,q,s)$ of $X_\bullet$ such that $s^2=s$. For (TR4), the proof depends on a basic fact that each morphism fits into a triangle uniquely up to isomorphism. But for $n$-angulated categories, the above two facts are not true in general.  Therefore, the original proof fails in our setting.
 For proving (N1)(c) and (N4), we develop some useful facts on $n$-angles in pre-$n$-angulated categories.

This note is organized as follows. In Section 2, we collect some facts on $n$-angles in pre-$n$-angulated categories. In Section 3, we recall some facts on idempotent completion of an additive category, then state and prove our main theorem.

\section{Some facts on $n$-angles}

In this section, we assume that $(\mathcal{C},\Sigma,\Theta)$ is a pre-$n$-angulated category, that is, $\Theta$ satisfies (N1)-(N3). We recall that the elements of $\Theta$ are called $n$-$angles$ and $\Theta$ must contain all contractible $n$-$\Sigma$-sequences \cite[Lemma 2.2(a)]{[BJT]}. For convenience, we denote by $X_\bullet[1]$ the left rotation of the $n$-$\Sigma$-sequence $X_\bullet$. Assume that $\varphi_\bullet:X_\bullet\rightarrow Y_\bullet$ is a morphism of $n$-$\Sigma$-sequences, we denote by $C(\varphi_\bullet)$ the mapping cone of $\varphi_\bullet$. We can find other unmentioned terminology and notations in \cite{[GKO]}.

\begin{lem}\label{1.1}
Let $\Theta$ be a class of $n$-$\Sigma$-sequences satisfying (N1)(a),(N1)(b),(N2) and (N3).
Assume that $$X_\bullet=(X_1\xrightarrow{\left(
                         \begin{smallmatrix}
                           f_1 \\
                           g_1 \\
                         \end{smallmatrix}
                       \right)}X_2\oplus Y_2\xrightarrow{(f_2, g_2)} X_3\xrightarrow{f_3}\cdots\xrightarrow{f_{n-1}} X_n\xrightarrow{f_n}\Sigma X_1)\in\Theta,$$
where $Y_2\in\mathcal{C}$.

(1) If $g_1=0$ or $g_2$ is a section, then
$X_\bullet\cong X'_\bullet\oplus Y'_\bullet$, where
$$X_\bullet'=(X_1\xrightarrow{f_1}X_2\xrightarrow{f_{21}}X_3'\xrightarrow{f_{31}}X_4\xrightarrow{f_{4}}\cdots\xrightarrow{f_{n-1}} X_n\xrightarrow{f_n}\Sigma X_1)\in\Theta,$$
$$Y'_\bullet=(0\rightarrow Y_2\xrightarrow{1} Y_2\rightarrow 0\rightarrow\cdots\rightarrow 0\rightarrow0).$$

(2) If $g_2=0$ or $g_1$ is a retraction, then
$X_\bullet\cong X''_\bullet\oplus Y_\bullet''$, where $$X_\bullet''=(X'_1\xrightarrow{f_{11}}X_2\xrightarrow{f_2}X_3\xrightarrow{f_3}\cdots\xrightarrow{f_{n-1}} X_n\xrightarrow{f_{n1}}\Sigma X'_1)\in\Theta,$$
$$Y''_\bullet=(Y_2\xrightarrow{1} Y_2\rightarrow 0\rightarrow \cdots\rightarrow 0\rightarrow\Sigma Y_2).$$
\end{lem}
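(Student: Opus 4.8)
The plan is to reduce both statements to a direct-sum decomposition of $n$-$\Sigma$-sequences and then quote closure under direct summands. For (1) I would produce an isomorphism $X_\bullet\cong X'_\bullet\oplus Y'_\bullet$ of $n$-$\Sigma$-sequences; since $Y'_\bullet$ is contractible it lies in $\Theta$, and as $X_\bullet\in\Theta$ the axiom (N1)(a) (closure under isomorphism and under direct summands) forces $X'_\bullet\in\Theta$ as well. Thus the entire content is the construction of the splitting, and the shape of $X'_\bullet$, with its new object $X_3'$ and induced maps $f_{21},f_{31}$, will be read off as the complementary summand.

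First I would put $X_\bullet$ into a normal form unifying the two hypotheses. If $g_2$ is a section with retraction $r$, then from $(f_2,g_2)\left(\begin{smallmatrix} f_1\\ g_1\end{smallmatrix}\right)=0$ I get $g_1=-rf_2f_1$, so the unipotent automorphism $\left(\begin{smallmatrix}1&0\\ rf_2&1\end{smallmatrix}\right)$ of $X_2\oplus Y_2$ is an isomorphism of $n$-$\Sigma$-sequences carrying $X_\bullet$ to one with first map $\left(\begin{smallmatrix} f_1\\0\end{smallmatrix}\right)$; this reduces the case ``$g_2$ a section'' to the case ``$g_1=0$'', and moreover replaces $f_2$ by $(1-g_2r)f_2$, for which $rf_2=0$. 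It therefore remains to treat $g_1=0$ and to manufacture a retraction $r$ of $g_2$ with $rf_2=0$. Here I would use that an $n$-angle is homological: applying $\mathrm{Hom}(-,Y_2)$ to $X_\bullet$ gives an exact sequence, and the projection $(0,1)\colon X_2\oplus Y_2\to Y_2$ is killed by $\left(\begin{smallmatrix} f_1\\0\end{smallmatrix}\right)$, hence factors as $r\circ(f_2,g_2)$ for some $r\colon X_3\to Y_2$; that is, $rg_2=1$ and $rf_2=0$. This Yoneda-exactness holds already under (N1)(a),(N1)(b),(N2),(N3), and it is exactly the point where the $n$-angulated structure, rather than pure additivity, is used.

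With the normal form $g_1=0$, $rg_2=1$, $rf_2=0$ in hand, I would write down the two comparison morphisms explicitly: $\varphi_\bullet\colon Y'_\bullet\to X_\bullet$ with $\varphi_2=\left(\begin{smallmatrix}0\\1\end{smallmatrix}\right)$, $\varphi_3=g_2$ and all other components zero, and $\psi_\bullet\colon X_\bullet\to Y'_\bullet$ with $\psi_2=(0,1)$, $\psi_3=r$ and all other components zero. Checking that these are morphisms of $n$-$\Sigma$-sequences uses only $f_3g_2=0$ (a consecutive composite in $X_\bullet$) together with $rf_2=0$ and $rg_2=1$, and one computes $\psi_\bullet\varphi_\bullet=1_{Y'_\bullet}$. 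Hence $e_\bullet:=\varphi_\bullet\psi_\bullet$ is an idempotent endomorphism of $X_\bullet$ split through $Y'_\bullet$; splitting off the complementary idempotent $1-e_\bullet$, which in degree three is $1-g_2r$ with complementary summand $X_3'$ of $Y_2$ in $X_3$, exhibits $X_\bullet\cong Y'_\bullet\oplus X'_\bullet$ with $X'_\bullet$ of the asserted form. Then (N1)(a) gives $X'_\bullet\in\Theta$, proving (1).

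Part (2) I would obtain as the formal dual of (1): rotating $X_\bullet$ by (N2), or passing to the opposite $n$-angulated category, turns the outgoing $Y_2$-summand into an incoming one, and the hypotheses ``$g_2=0$'' and ``$g_1$ a retraction'' become the duals of ``$g_1=0$'' and ``$g_2$ a section''; the same construction then produces $X_\bullet\cong X''_\bullet\oplus Y''_\bullet$, where $Y''_\bullet$ is the trivial $n$-angle on $Y_2$ and so lies in $\Theta$ by (N1)(b). The main obstacle throughout is the single homological step: producing the retraction $r$ (dually, a section) out of the hypothesis $g_1=0$ (dually $g_2=0$), since the uniqueness of cones available in the triangulated case is unavailable here; once that morphism is in place, the remainder is additive bookkeeping together with the splitting of an idempotent that is already split through $Y_2$.
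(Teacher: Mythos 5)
Your proof is correct, and its one genuinely homological step coincides with the paper's: producing the retraction $r$ with $rg_2=1$ and $rf_2=0$ from exactness of $\mathrm{Hom}(-,Y_2)$ is exactly what the paper does by completing $(0,1)\colon X_2\oplus Y_2\to Y_2$ to a morphism from $X_\bullet$ into the contractible sequence $0\to Y_2\xrightarrow{1}Y_2\to 0\to\cdots$, using only (N1)(b), (N2), (N3). You run the reduction in the opposite direction, though: the paper proves ``$g_1=0$ implies $g_2$ is a section'' and then silently normalizes $g_1=0$ in the section case (``now we can assume''), whereas your unipotent automorphism $\left(\begin{smallmatrix}1&0\\ rf_2&1\end{smallmatrix}\right)$ makes that suppressed step explicit --- a small improvement in completeness. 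The endgame genuinely differs. The paper works with an arbitrary retraction, decomposes $X_3=X_3'\oplus g_2(Y_2)$, so the second map has the form $\left(\begin{smallmatrix}f_{21}&0\\ f_{22}&g_{22}\end{smallmatrix}\right)$ with a possibly nonzero corner $f_{22}$, and must invoke exactness a second time to write $f_{22}=af_{21}$ and conjugate by $\left(\begin{smallmatrix}1&0\\ a&g_{22}\end{smallmatrix}\right)$; because you arranged $rf_2=0$ at the outset, your split idempotent $e_\bullet=\varphi_\bullet\psi_\bullet$ with $\psi_\bullet\varphi_\bullet=1_{Y'_\bullet}$ diagonalizes the sequence in one stroke and the second exactness application disappears --- a slightly cleaner route to the same normal form. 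One caveat, which you share with the paper rather than introduce: what actually produces $X_3'$ is the splitting of the \emph{complementary} idempotent $1-e_3=1-g_2r$, and in a bare additive category a split idempotent need not have a split complement; the paper makes the same tacit assumption when it writes $X_3'\oplus g_2(Y_2)$, and it is harmless in the intended application (the lemma is invoked inside the idempotent complete category $\widetilde{\mathcal{C}}$), but your phrase ``an idempotent that is already split through $Y_2$'' slightly elides the point. Your treatment of (2) by rotation/opposite category matches the paper's ``can be proved similarly''; just note that aligning the position of the $Y_2$-summand requires combining the duality with rotations (N2), under which $\Theta$ is closed up to the usual signs.
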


\begin{proof}
We only prove (1) since (2) can be proved similarly.
We first show that $g_1=0$ implies that $g_2$ is a section. In fact, if $g_1=0$, then the following diagram
$$\xymatrix{
X_1\ar[r]^{\left(
                         \begin{smallmatrix}
                           f_1 \\
                           g_1 \\
                         \end{smallmatrix}
                       \right)}\ar[d] & X_2\oplus Y_2\ar[r]^{(f_2, g_2)}\ar[d]^{(0,1)} &  X_3\ar[r]^{f_3} & \cdots \ar[r]^{f_{n-1}} & X_n\ar[r]^{f_n} & \Sigma X_1 \ar[d]\\
0\ar[r] & Y_2\ar[r]^{1} & Y_2\ar[r] & \cdots \ar[r] & 0 \ar[r] & 0\\
}$$ can be completed to a morphism of $n$-$\Sigma$-sequences by (N1)(b), (N2) and (N3). Thus there exists a morphism $g_2':X_3\rightarrow Y_2$ such that $g_2'f_2=0$ and $g_2'g_2=1$. Therefore, $g_2$ is a section.

Now we can assume that 
$$X_\bullet=(X_1\xrightarrow{\left(
                         \begin{smallmatrix}
                           f_1 \\
                           0 \\
                         \end{smallmatrix}
                       \right)}X_2\oplus Y_2\xrightarrow{\left(
                                                           \begin{smallmatrix}
                                                             f_{21} & 0 \\
                                                             f_{22} & g_{22} \\
                                                           \end{smallmatrix}
                                                         \right)
                       } X'_3\oplus g_2(Y_2)\xrightarrow{(f_{31},f_{32})} X_4\xrightarrow{f_4}\cdots\xrightarrow{f_{n-1}} X_n\xrightarrow{f_n}\Sigma X_1)$$
where $g_{22}$ is an isomorphism and $f_{32}=0$. Since $(f_{22},0)\left(
                         \begin{smallmatrix}
                           f_1 \\
                           0 \\
                         \end{smallmatrix}
                       \right)=f_{22}f_1=0$, there exists a morphism $(a,b): X'_3\oplus g_2(Y_2)\rightarrow g_2(Y_2)$, such that $(f_{22},0)=(a,b)\left(
                                                           \begin{smallmatrix}
                                                             f_{21} & 0 \\
                                                             f_{22} & g_{22} \\
                                                           \end{smallmatrix}
                                                         \right)$.
Thus $b=0$ and $f_{22}=af_{21}$. The following commutative diagram
$$\xymatrixcolsep{4pc}\xymatrixrowsep{3pc}\xymatrix{X_2\oplus Y_2\ar[r]^{\left(
                                                           \begin{smallmatrix}
                                                           f_{21} & 0 \\
                                                              0 & 1 \\
                                                           \end{smallmatrix}
                                                         \right)
                       }\ar@{=}[d] & X'_3\oplus Y_2\ar[r]^{(f_{31},0)}\ar[d]^{\left(
                                                                                          \begin{smallmatrix}
                                                                                            1 & 0 \\
                                                                                             a & g_{22} \\
                                                                                          \end{smallmatrix}
                                                                                        \right)
                       } & X_4\ar@{=}[d]\\
X_2\oplus Y_2\ar[r]^{\left(
                                                           \begin{smallmatrix}
                                                             f_{21} & 0 \\
                                                            f_{22} & g_{22} \\
                                                           \end{smallmatrix}
                                                         \right)
                       }& X'_3\oplus g_2(Y_2)\ar[r]^{(f_{31},0)} & X_4\\}$$
implies that $ X'_\bullet\oplus Y'_\bullet\cong X_\bullet$. By (N1)(a), $\Theta$ is closed under direct summands, thus we have $X'_\bullet\in\Theta$.
\end{proof}

\begin{lem}\label{1.2}



Let  $$\xymatrix{
X_\bullet\ar[d]^{\varphi_\bullet} & X_1 \ar[r]^{f_1}\ar[d]^{\varphi_1} & X_2 \ar[r]^{f_2}\ar[d]^{\varphi_2} & X_3 \ar[r]^{f_3}\ar[d]^{\varphi_3} & \cdots \ar[r]^{f_{n-1}}& X_n \ar[r]^{f_n}\ar[d]^{\varphi_n} & \Sigma X_1 \ar[d]^{\Sigma \varphi_1}\\
Y_\bullet\ar[d]^{\psi_\bullet} & Y_1 \ar[r]^{g_1}\ar[d]^{\psi_1} & Y_2 \ar[r]^{g_2}\ar[d]^{\psi_2} & Y_3 \ar[r]^{g_3}\ar[d]^{\psi_3} & \cdots \ar[r]^{g_{n-1}} & Y_n \ar[r]^{g_n}\ar[d]^{\psi_n}& \Sigma Y_1\ar[d]^{\Sigma\psi_1}\\
Z_\bullet & Z_1 \ar[r]^{h_1} & Z_2 \ar[r]^{h_2} & Z_3 \ar[r]^{h_3} & \cdots \ar[r]^{h_{n-1}} & Z_n \ar[r]^{h_n}& \Sigma Z_1\\
}$$ be a commutative diagram where each row is an $n$-angle. Assume that the morphism $\varphi_\bullet$ is a weak isomorphism. Then the mapping cone $C(\psi_\bullet\varphi_\bullet)$ is weakly isomorphic to the mapping cone $C(\psi_\bullet)$. Thus, $C(\psi_\bullet\varphi_\bullet)$ is an $n$-angle if and only if  so is $C(\psi_\bullet)$. In particular, if $\varphi_\bullet$ is an isomorphism, then $C(\psi_\bullet\varphi_\bullet)$ is isomorphic to $C(\psi_\bullet)$.
\end{lem}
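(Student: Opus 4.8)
The plan is to exhibit an explicit morphism realizing the weak isomorphism and then to read off both the $\Theta$-equivalence and the special isomorphism from it. Recall that the two mapping cones have the form $C(\psi_\bullet\varphi_\bullet)_i=X_{i+1}\oplus Z_i$ with differentials $\left(\begin{smallmatrix}-f_{i+1}&0\\ \psi_{i+1}\varphi_{i+1}&h_i\end{smallmatrix}\right)$, and $C(\psi_\bullet)_i=Y_{i+1}\oplus Z_i$ with differentials $\left(\begin{smallmatrix}-g_{i+1}&0\\ \psi_{i+1}&h_i\end{smallmatrix}\right)$. First I would define $\Phi_\bullet\colon C(\psi_\bullet\varphi_\bullet)\to C(\psi_\bullet)$ by $\Phi_i=\left(\begin{smallmatrix}\varphi_{i+1}&0\\0&1\end{smallmatrix}\right)$ and check that it is a morphism of $n$-$\Sigma$-sequences; the single nontrivial commuting square reduces to $\varphi_{i+2}f_{i+1}=g_{i+1}\varphi_{i+1}$, which holds because $\varphi_\bullet$ is a morphism. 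Since each $\Phi_i$ is block diagonal with invertible lower block, $\Phi_\bullet$ is an isomorphism as soon as every $\varphi_{i+1}$ is invertible; this already settles the last assertion, that $C(\psi_\bullet\varphi_\bullet)\cong C(\psi_\bullet)$ when $\varphi_\bullet$ is an isomorphism.

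For the weak case I would compute the mapping cone of $\Phi_\bullet$ itself. In degree $i$ it is $X_{i+2}\oplus Z_{i+1}\oplus Y_{i+1}\oplus Z_i$, and the identity block of $\Phi_{i+1}=\mathrm{diag}(\varphi_{i+2},1)$ contributes an identity map $Z_{i+1}\to Z_{i+1}$ to its differential. Eliminating these split summands by the usual Gaussian reduction produces, after the standard sign normalization, an isomorphism $C(\Phi_\bullet)\cong C(\varphi_\bullet)[1]\oplus V_\bullet$, where $V_\bullet$ is contractible (the assembly of the trivial pieces $Z_{i+1}\to Z_{i+1}$) and $C(\varphi_\bullet)[1]$ is the left rotation of $C(\varphi_\bullet)$. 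Because $\varphi_\bullet$ is a weak isomorphism — equivalently, $C(\varphi_\bullet)$ is contractible — its rotation is contractible, and hence so is $C(\Phi_\bullet)$. Therefore $\Phi_\bullet$ is itself a weak isomorphism, so $C(\psi_\bullet\varphi_\bullet)$ and $C(\psi_\bullet)$ are weakly isomorphic.

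It remains to pass from weak isomorphism to the $\Theta$-statement. A morphism of $n$-$\Sigma$-sequences with contractible cone becomes an isomorphism after adding suitable contractible sequences to its source and target, so there are contractible $P_\bullet,Q_\bullet$ with $C(\psi_\bullet\varphi_\bullet)\oplus P_\bullet\cong C(\psi_\bullet)\oplus Q_\bullet$. Since $\Theta$ contains all contractible $n$-$\Sigma$-sequences and, by (N1)(a), is closed under direct sums and under direct summands, membership in $\Theta$ is transported across this isomorphism in both directions; thus $C(\psi_\bullet\varphi_\bullet)\in\Theta$ if and only if $C(\psi_\bullet)\in\Theta$.

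The step I expect to be the crux is proving that $\Phi_\bullet$ is a weak isomorphism. The naive candidate inverse $\left(\begin{smallmatrix}\sigma_{i+1}&0\\0&1\end{smallmatrix}\right)$, built from a homotopy inverse $\sigma_\bullet$ of $\varphi_\bullet$, fails to be a morphism of $n$-$\Sigma$-sequences, because the relevant square commutes only up to the homotopy witnessing $\varphi_\bullet\sigma_\bullet\sim\mathrm{id}$, so one cannot simply invert $\Phi_\bullet$ componentwise. Routing the argument through the cone identity $C(\Phi_\bullet)\cong C(\varphi_\bullet)[1]\oplus V_\bullet$ sidesteps this difficulty entirely, trading the explicit homotopy correction for a single Gaussian elimination and reducing everything to the contractibility of $C(\varphi_\bullet)$, which is exactly the hypothesis.
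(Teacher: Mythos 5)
Your comparison morphism $\Phi_\bullet$ with components $\left(\begin{smallmatrix}\varphi_{i+1}&0\\0&1\end{smallmatrix}\right)$ is exactly the one the paper writes down, and your treatment of the case where $\varphi_\bullet$ is an isomorphism agrees with the paper's. But there is a genuine gap at precisely the step you call the crux: you replace the operative notion of weak isomorphism by the characterization ``$C(\varphi_\bullet)$ is contractible'', announced with an unexplained ``equivalently''. In this paper, following Geiss--Keller--Oppermann, a weak isomorphism is by \emph{definition} a morphism of $n$-$\Sigma$-sequences such that $\varphi_i$ and $\varphi_{i+1}$ are isomorphisms for some $i$ (indices modulo $n$, with $\varphi_{n+1}=\Sigma\varphi_1$). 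This is not equivalent to contractibility of the cone. One direction of your claimed equivalence is outright false: if $K_\bullet$ is a contractible $n$-angle with nonzero terms in every degree (a direct sum of trivial $n$-angles placed in all positions), the inclusion $X_\bullet\to X_\bullet\oplus K_\bullet$ is a morphism of $n$-angles whose cone is contractible (it is isomorphic to $C(1_{X_\bullet})\oplus K_\bullet$), yet no component of it is an isomorphism, so it is not a weak isomorphism. Consequently your final inference --- ``$C(\Phi_\bullet)$ is contractible, therefore $\Phi_\bullet$ is a weak isomorphism'' --- does not deliver the lemma's first assertion. The other direction, that a weak isomorphism of $n$-angles has contractible cone (i.e.\ is a homotopy equivalence), is what you use to feed the hypothesis into your argument; it holds for $n=3$, where weak isomorphisms of triangles are isomorphisms by the five lemma, but for general $n$ it is proved nowhere in \cite{[GKO]} and you give no argument --- and the introduction of this paper warns explicitly that such triangulated-case facts fail for $n>3$. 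So both the intake of the hypothesis and the output of the conclusion are broken under the actual definition.

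The irony is that with the correct definition the crux evaporates: if $\varphi_i$ and $\varphi_{i+1}$ are isomorphisms, then the two consecutive components $\left(\begin{smallmatrix}\varphi_{i}&0\\0&1\end{smallmatrix}\right)$ and $\left(\begin{smallmatrix}\varphi_{i+1}&0\\0&1\end{smallmatrix}\right)$ of $\Phi_\bullet$ are isomorphisms, so $\Phi_\bullet$ is a weak isomorphism by inspection --- this one-line observation is the paper's entire argument for the first assertion. For the $\Theta$-transfer the paper then notes that $C(\psi_\bullet\varphi_\bullet)$ and $C(\psi_\bullet)$ are exact, being mapping cones of morphisms between exact $n$-$\Sigma$-sequences (\cite[Proposition 2.5]{[GKO]}), and invokes \cite[Lemma 2.4]{[GKO]}, which transports membership in $\Theta$ along weak isomorphisms of exact sequences. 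Your substitute route --- that a morphism with contractible cone becomes an isomorphism after adding contractible summands to source and target, followed by (N1)(a) --- rests on yet another lemma you do not prove. Your Gaussian-elimination identity $C(\Phi_\bullet)\cong C(\varphi_\bullet)[1]\oplus V_\bullet$ is correct as algebra, but it is doing work the lemma never asked for; to repair the proof, delete the contractibility detour and argue directly from the definition as above.
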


\begin{proof}
It is easy to see that we have the following commutative diagram
$$\xymatrixcolsep{3.3pc}\xymatrixrowsep{3pc}\xymatrix{
 X_2\oplus Z_1\ar[r]^{\left(
                                         \begin{smallmatrix}
                                           -f_2 & 0 \\
                                           \psi_2\varphi_2 & h_1 \\
                                         \end{smallmatrix}
                                       \right)} \ar[d]^{\left(
                                         \begin{smallmatrix}
                                           \varphi_2 & 0 \\
                                           0 & 1 \\
                                         \end{smallmatrix}
                                       \right)}
& X_3\oplus Z_2\ar[r]^{\left(
                                         \begin{smallmatrix}
                                           -f_3 & 0 \\
                                           \psi_3\varphi_3 & h_2 \\
                                         \end{smallmatrix}
                                       \right)} \ar[d]^{\left(
                                         \begin{smallmatrix}
                                           \varphi_3 & 0 \\
                                           0 & 1 \\
                                         \end{smallmatrix}
                                       \right)}
& \cdots \ar[r]^{\left(
                                         \begin{smallmatrix}
                                           -f_n & 0 \\
                                           \psi_n\varphi_n & h_{n-1} \\
                                         \end{smallmatrix}
                                       \right)} & \Sigma X_1\oplus Z_n\ar[r]^{\left(
                                         \begin{smallmatrix}
                                           -\Sigma f_1 & 0 \\
                                           \Sigma\psi_1\Sigma\varphi_1 & h_n \\
                                         \end{smallmatrix}
                                       \right)} \ar[d]^{\left(
                                         \begin{smallmatrix}
                                           \Sigma\varphi_1 & 0 \\
                                           0 & 1 \\
                                         \end{smallmatrix}
                                       \right)}
& \Sigma X_2\oplus \Sigma Z_1 \ar[d]^{\left(
                                         \begin{smallmatrix}
                                           \Sigma\varphi_2 & 0 \\
                                           0 & 1 \\
                                         \end{smallmatrix}
                                       \right)} \\
 Y_2\oplus Z_1 \ar[r]^{\left(
                                         \begin{smallmatrix}
                                           -g_2 & 0 \\
                                           \psi_2 & h_1 \\
                                         \end{smallmatrix}
                                       \right)}
& Y_3\oplus Z_2 \ar[r]^{\left(
                                         \begin{smallmatrix}
                                           -g_3 & 0 \\
                                           \psi_3 & h_2 \\
                                         \end{smallmatrix}
                                       \right)}
& \cdots \ar[r]^{\left(
                                         \begin{smallmatrix}
                                           -g_n & 0 \\
                                           \psi_n & h_{n-1} \\
                                         \end{smallmatrix}
                                       \right)} & \Sigma Y_1\oplus Z_n \ar[r]^{\left(
                                         \begin{smallmatrix}
                                           -\Sigma g_1 & 0 \\
                                           \Sigma\psi_1 & h_n \\
                                         \end{smallmatrix}
                                       \right)}
&\Sigma Y_2\oplus \Sigma Z_1 \\
}$$ which implies that $C(\psi_\bullet\varphi_\bullet)$ is weakly isomorphic to $C(\psi_\bullet)$ since $\varphi_\bullet$ is a weak isomorphism. Following \cite[Proposition 2.5]{[GKO]},  the $n$-angles $X_\bullet, Y_\bullet$ and $Z_\bullet$ are exact, thus both $C(\psi_\bullet\varphi_\bullet)$ and $C(\psi_\bullet)$ are exact. By \cite[Lemma 2.4]{[GKO]} we infer that $C(\psi_\bullet\varphi_\bullet)$ is an $n$-angle if and only if so is $C(\psi_\bullet)$. The last assertion follows from the above diagram immediately.
\end{proof}

The following Lemma is a higher version of \cite[Lemma 1.16]{[BS]}.

\begin{lem}\label{1.3}
Let $$\widetilde{X}_\bullet=(X_1\oplus Y_1\xrightarrow{\left(
                              \begin{smallmatrix}
                                f_1 & 0 \\
                                \varphi_1 & g_1 \\
                              \end{smallmatrix}
                            \right)}
 X_2\oplus Y_2 \xrightarrow{\left(
                              \begin{smallmatrix}
                                f_2 & 0 \\
                                \varphi_2 & g_2 \\
                              \end{smallmatrix}
                            \right)}
 \cdots \xrightarrow{\left(
                            \begin{smallmatrix}
                               f_{n-1} & 0 \\
                                \varphi_{n-1} & g_{n-1} \\
                             \end{smallmatrix}
                           \right)}
 X_n\oplus Y_n \xrightarrow{\left(
                              \begin{smallmatrix}
                                 f_n & 0 \\
                                \varphi_n & g_n \\
                              \end{smallmatrix}
                            \right)}
 \Sigma X_1\oplus \Sigma Y_1 \\)
$$ be an $n$-angle. If $X_\bullet=(X_1\xrightarrow{f_1}X_2\xrightarrow{f_2}\cdots\xrightarrow{f_{n-1}}X_n\xrightarrow{f_n}\Sigma X_1)$ or
$Y_\bullet=(Y_1\xrightarrow{g_1}Y_2\xrightarrow{g_2}\cdots\xrightarrow{g_{n-1}}Y_n\xrightarrow{g_n}\Sigma Y_1)$ is a contractible $n$-angle, then $\widetilde{X}_\bullet$ is isomorphic to the direct sum of $X_\bullet$ and $Y_\bullet$.
\end{lem}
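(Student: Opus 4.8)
The plan is to exhibit an explicit isomorphism of $n$-$\Sigma$-sequences between $\widetilde{X}_\bullet$ and $X_\bullet\oplus Y_\bullet$. First I would record what the hypothesis that $\widetilde{X}_\bullet$ is an $n$-$\Sigma$-sequence buys us: since consecutive composites vanish, comparing entries of $\left(\begin{smallmatrix} f_{i+1} & 0\\ \varphi_{i+1} & g_{i+1}\end{smallmatrix}\right)\left(\begin{smallmatrix} f_i & 0\\ \varphi_i & g_i\end{smallmatrix}\right)=0$ shows that $X_\bullet$ and $Y_\bullet$ are themselves $n$-$\Sigma$-sequences (from the diagonal entries) and yields the key relation $\varphi_{i+1}f_i=-g_{i+1}\varphi_i$ (from the $(2,1)$-entry). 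The isomorphism I look for will have the lower-triangular shape $\Psi_i=\left(\begin{smallmatrix}1 & 0\\ b_i & 1\end{smallmatrix}\right)\colon X_i\oplus Y_i\to X_i\oplus Y_i$ for suitable $b_i\colon X_i\to Y_i$; each such $\Psi_i$ is automatically invertible with inverse $\left(\begin{smallmatrix}1 & 0\\ -b_i & 1\end{smallmatrix}\right)$.

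Next I would reduce the commutativity of the resulting ladder to a single homotopy identity. Imposing $\Psi_{i+1}\left(\begin{smallmatrix} f_i & 0\\ \varphi_i & g_i\end{smallmatrix}\right)=\left(\begin{smallmatrix} f_i & 0\\ 0 & g_i\end{smallmatrix}\right)\Psi_i$ and comparing entries, the only nontrivial requirement is $\varphi_i=g_ib_i-b_{i+1}f_i$ for every $i$ (with the cyclic convention discussed below at the wrap-around). Thus the whole problem collapses to producing maps $b_i\colon X_i\to Y_i$ satisfying this identity; in other words, I must show that the degree-one cocycle $\varphi_\bullet$ is a coboundary, and it is exactly here that contractibility is used.

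I would then build $b_\bullet$ directly from the contracting homotopy. If $X_\bullet$ is contractible, choose $\sigma_i\colon X_i\to X_{i-1}$ with $\mathrm{id}_{X_i}=f_{i-1}\sigma_i+\sigma_{i+1}f_i$ and set $b_i=-\varphi_{i-1}\sigma_i$; using $g_i\varphi_{i-1}=-\varphi_if_{i-1}$ one computes $g_ib_i-b_{i+1}f_i=\varphi_i(f_{i-1}\sigma_i+\sigma_{i+1}f_i)=\varphi_i$. If instead $Y_\bullet$ is contractible, choose $\tau_i\colon Y_i\to Y_{i-1}$ with $\mathrm{id}_{Y_i}=g_{i-1}\tau_i+\tau_{i+1}g_i$ and set $b_i=\tau_{i+1}\varphi_i$; the same relation $\varphi_{i+1}f_i=-g_{i+1}\varphi_i$ gives $g_ib_i-b_{i+1}f_i=(g_i\tau_{i+1}+\tau_{i+2}g_{i+1})\varphi_i=\varphi_i$. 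Either way the required identity holds, so $\Psi_\bullet$ is an isomorphism $\widetilde{X}_\bullet\xrightarrow{\sim}X_\bullet\oplus Y_\bullet$, which is the claim.

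The step I expect to demand the most care is the behaviour at the two ends of the sequence, where the automorphism $\Sigma$ intervenes. The last square of the ladder uses the vertical map $\Sigma\Psi_1=\left(\begin{smallmatrix}1 & 0\\ \Sigma b_1 & 1\end{smallmatrix}\right)$ on $\Sigma X_1\oplus\Sigma Y_1$, so the identity at $i=n$ reads $\varphi_n=g_nb_n-(\Sigma b_1)f_n$; this forces the cyclic convention $b_{n+1}=\Sigma b_1$, which is consistent with the formulas above once the contracting homotopy is read cyclically (for instance $\sigma_{n+1}=\Sigma\sigma_1$ and $\varphi_0=\Sigma^{-1}\varphi_n$). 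Checking that the defining homotopy relations for $X_\bullet$ (resp.\ $Y_\bullet$) and the relation $\varphi_{i+1}f_i=-g_{i+1}\varphi_i$ remain valid at $i=1$ and $i=n$ under these conventions is routine bookkeeping, but it is the only point where anything beyond formal matrix algebra is required.
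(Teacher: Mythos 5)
Your proposal is correct and takes essentially the same route as the paper: the paper's proof likewise exhibits the lower-unitriangular isomorphism whose off-diagonal entries are $b_i=-\varphi_{i-1}h_{i-1}$ (with $b_1=-\Sigma^{-1}(\varphi_n h_n)$ at the wrap-around, exactly your cyclic convention $b_{n+1}=\Sigma b_1$), verified via the relation $\varphi_{i+1}f_i=-g_{i+1}\varphi_i$ and the contracting homotopy, treating the $Y_\bullet$-contractible case symmetrically. The only cosmetic slip is your attribution of the vanishing of consecutive composites to $\widetilde{X}_\bullet$ being an $n$-$\Sigma$-sequence; that vanishing holds because $\widetilde{X}_\bullet$ is an $n$-angle (exactness, \cite[Proposition 2.5]{[GKO]}), not for arbitrary $n$-$\Sigma$-sequences.
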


\begin{proof}
Assume that $X_\bullet$ is a contractible $n$-angle. Then by definition there exist morphisms $h_i:X_{i+1}\rightarrow X_i$ for $1\leq i\leq n-1$ and $h_n: \Sigma X_1\rightarrow X_n$ such that $1_{ X_1}= h_1 f_1+\Sigma^{-1}(f_nh_n)$ and $1_{X_i}=h_if_i+f_{i-1}h_{i-1}$ for $2\leq i\leq n$. Since $\varphi_{i+1}f_i=-g_{i+1}\varphi_i$ for $1\leq i\leq n-1$ and $\Sigma\varphi_1 \cdot f_n=-\Sigma g_1\cdot\varphi_n$, it is not hard to check that the following diagram
$$\xymatrixcolsep{3.0pc}\xymatrixrowsep{3.5pc}\xymatrix{
X_1\oplus Y_1\ar[r]^{\left(
                              \begin{smallmatrix}
                                f_1 & 0 \\
                                \varphi_1 & g_1 \\
                              \end{smallmatrix}
                            \right)}\ar[d]^{\left(
                              \begin{smallmatrix}
                                1 & 0 \\
                                -\Sigma^{-1}(\varphi_nh_n) & 1 \\
                              \end{smallmatrix}
                            \right)} &
 X_2\oplus Y_2 \ar[r]^{\left(
                              \begin{smallmatrix}
                                f_2 & 0 \\
                                \varphi_2 & g_2 \\
                              \end{smallmatrix}
                            \right)} \ar[d]^{\left(
                              \begin{smallmatrix}
                                1 & 0 \\
                                -\varphi_1h_1 & 1 \\
                              \end{smallmatrix}
                            \right)} &
 \cdots \ar[r]^{\left(
                            \begin{smallmatrix}
                               f_{n-1} & 0 \\
                                \varphi_{n-1} & g_{n-1} \\
                             \end{smallmatrix}
                           \right)}  &
  X_n\oplus Y_n \ar[r]^{\left(
                              \begin{smallmatrix}
                                 f_n & 0 \\
                                \varphi_n & g_n \\
                              \end{smallmatrix}
                            \right)} \ar[d]^{\left(
                              \begin{smallmatrix}
                                1 & 0 \\
                               -\varphi_{n-1}h_{n-1} & 1 \\
                              \end{smallmatrix}
                            \right)} &
 \Sigma X_1\oplus \Sigma Y_1 \ar[d]^{\left(
                              \begin{smallmatrix}
                                1 & 0 \\
                                -\varphi_nh_n & 1 \\
                              \end{smallmatrix}
                            \right)}\\
 X_1\oplus Y_1\ar[r]^{\left(
                              \begin{smallmatrix}
                                f_1 & 0 \\
                                0 & g_1 \\
                              \end{smallmatrix}
                            \right)} &
 X_2\oplus Y_2 \ar[r]^{\left(
                              \begin{smallmatrix}
                                f_2 & 0 \\
                                0 & g_2 \\
                              \end{smallmatrix}
                            \right)}  &
 \cdots \ar[r]^{\left(
                            \begin{smallmatrix}
                               f_{n-1} & 0 \\
                                0 & g_{n-1} \\
                             \end{smallmatrix}
                           \right)}  &
 X_n\oplus Y_n \ar[r]^{\left(
                              \begin{smallmatrix}
                                 f_n & 0 \\
                                0 & g_n \\
                              \end{smallmatrix}
                            \right)} &
 \Sigma X_1\oplus \Sigma Y_1 \\
 }$$is commutative, which implies that $\widetilde{X}_\bullet\cong X_\bullet\oplus Y_\bullet$. If $Y_\bullet$ is contractible, we can prove it similarly.
\end{proof}

\begin{lem}\label{1.4}
Let $\varphi_\bullet=\left(
                      \begin{array}{cc}
                       \alpha_\bullet & \beta_\bullet \\
                        \gamma_\bullet & \delta_\bullet\\
                      \end{array}
                    \right):X_\bullet\oplus X'_\bullet\rightarrow Y_\bullet\oplus Y'_\bullet
$ and $\alpha_\bullet:X_\bullet\rightarrow Y_\bullet$  be morphisms of $n$-angles, where $X'_\bullet$ and $Y'_\bullet$ are contractible, then the mapping cone
$$C(\varphi_\bullet)\cong C(\alpha_\bullet)\oplus X'_\bullet[1]\oplus Y'_\bullet.$$
\end{lem}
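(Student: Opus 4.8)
The plan is to reduce $\varphi_\bullet$ to its diagonal part and then invoke Lemma \ref{1.3}. Recall that $C(\varphi_\bullet)$ has $i$-th term $(X_{i+1}\oplus X'_{i+1})\oplus(Y_i\oplus Y'_i)$, and that if $\varphi_\bullet$ were the diagonal morphism $\bigl(\begin{smallmatrix}\alpha_\bullet & 0\\ 0 & \delta_\bullet\end{smallmatrix}\bigr)$, with $\delta_\bullet\colon X'_\bullet\to Y'_\bullet$ the lower-right block, then the cone differential would be block diagonal and hence $C(\varphi_\bullet)=C(\alpha_\bullet)\oplus C(\delta_\bullet)$. I therefore split the argument into two parts: first replacing $\varphi_\bullet$ by its diagonal without changing the cone up to isomorphism, and then identifying $C(\delta_\bullet)$.

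For the first part, write $\varphi_\bullet=\bigl(\begin{smallmatrix}\alpha_\bullet & 0\\ 0 & \delta_\bullet\end{smallmatrix}\bigr)+\bigl(\begin{smallmatrix}0 & \beta_\bullet\\ \gamma_\bullet & 0\end{smallmatrix}\bigr)$. I would first observe that the off-diagonal morphism is null-homotopic: since $X'_\bullet$ is contractible, composing $\beta_\bullet\colon X'_\bullet\to Y_\bullet$ with a contraction $h$ of $X'_\bullet$ exhibits $\beta_\bullet$ as null-homotopic, and dually a contraction $k$ of the contractible sequence $Y'_\bullet$ makes $\gamma_\bullet\colon X_\bullet\to Y'_\bullet$ null-homotopic. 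Because the mapping cone depends only on the homotopy class of the morphism, this yields an isomorphism of $n$-$\Sigma$-sequences $C(\varphi_\bullet)\cong C(\alpha_\bullet)\oplus C(\delta_\bullet)$; concretely, I would record the explicit isomorphism whose components are the identity perturbed in the off-diagonal by the homotopies $\beta_i h_i$ and $k_i\gamma_{i+1}$.

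For the second part, the cone $C(\delta_\bullet)$ has $i$-th term $X'_{i+1}\oplus Y'_i$ and lower-triangular differential $\bigl(\begin{smallmatrix}-f'_{i+1} & 0\\ \delta_{i+1} & g'_i\end{smallmatrix}\bigr)$, which is exactly the shape treated in Lemma \ref{1.3}, with upper-left diagonal block (isomorphic to) $X'_\bullet[1]$ and lower-right block $Y'_\bullet$. Since $X'_\bullet$ and $Y'_\bullet$ are contractible, so is $C(\delta_\bullet)$, whence it belongs to $\Theta$, and $X'_\bullet[1]$ is itself a contractible $n$-angle; thus Lemma \ref{1.3} gives $C(\delta_\bullet)\cong X'_\bullet[1]\oplus Y'_\bullet$. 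Combining the two parts yields $C(\varphi_\bullet)\cong C(\alpha_\bullet)\oplus X'_\bullet[1]\oplus Y'_\bullet$, as desired.

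The main obstacle is the first part: one must check that the map assembled from the two contracting homotopies is genuinely a morphism --- indeed an isomorphism --- of $n$-$\Sigma$-sequences. This requires the chain-map identities for $\beta_\bullet$ and $\gamma_\bullet$ together with the contraction relations, and careful bookkeeping of the signs coming from the cone differential and from the shift $[1]$ (in particular the sign discrepancy between the top-left block $-f'_{i+1}$ of $C(\delta_\bullet)$ and the differentials of $X'_\bullet[1]$, which is absorbed by a standard alternating-sign isomorphism). Once the off-diagonal terms are removed, the splitting of $C(\delta_\bullet)$ is immediate from Lemma \ref{1.3}.
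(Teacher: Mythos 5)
Your proof is correct, but it takes a genuinely different route from the paper's. The paper never invokes homotopy invariance of the mapping cone: it writes $\varphi_\bullet=\varphi'_\bullet\left(\begin{smallmatrix}0&1\\1&0\end{smallmatrix}\right)$ with $\varphi'_\bullet=\left(\begin{smallmatrix}\beta_\bullet&\alpha_\bullet\\ \delta_\bullet&\gamma_\bullet\end{smallmatrix}\right)\colon X'_\bullet\oplus X_\bullet\to Y_\bullet\oplus Y'_\bullet$, uses the last assertion of Lemma \ref{1.2} (precomposition with the swap isomorphism) to get $C(\varphi_\bullet)\cong C(\varphi'_\bullet)$, and then applies Lemma \ref{1.3} twice to $C(\varphi'_\bullet)$, whose terms $X'_{i+1}\oplus X_{i+1}\oplus Y_i\oplus Y'_i$ carry a lower-triangular differential with the contractible block $X'_\bullet[1]$ at the top and the contractible block $Y'_\bullet$ at the bottom; peeling these off leaves $C(\alpha_\bullet)$. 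You instead prove and use the principle that homotopic morphisms of $n$-$\Sigma$-sequences have isomorphic cones, via the unitriangular isomorphism $\left(\begin{smallmatrix}1&0\\ \Theta&1\end{smallmatrix}\right)$ --- a fact stated nowhere in the paper but elementary and valid here, since the chain-map check reduces exactly to the homotopy identity $\varphi_{i+1}-\varphi'_{i+1}=\Theta_{i+1}f_{i+1}+g_i\Theta_i$ (with the usual $\Sigma^{-1}$ twist at the wrap-around, just as in the contraction convention of Lemma \ref{1.3}), and your homotopies $\beta_ih_i$ and $k_i\gamma_{i+1}$ are indeed the right ones. Your route buys a reusable general principle and is, if anything, more efficient than you exploit: since $X'_\bullet$ is contractible, $\delta_\bullet$ itself is null-homotopic, so homotopy invariance immediately gives $C(\delta_\bullet)\cong C(0)\cong X'_\bullet[1]\oplus Y'_\bullet$, making your detour through Lemma \ref{1.3} --- together with the (correct, but otherwise needed) verification that $C(\delta_\bullet)$ is contractible and hence lies in $\Theta$ so that Lemma \ref{1.3} applies --- unnecessary. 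The paper's route buys economy of means: it stays strictly inside the already-proved Lemmas \ref{1.2} and \ref{1.3} with no new homotopy bookkeeping; morally the two arguments are cousins, as the proof of Lemma \ref{1.3} is itself the same unitriangular-isomorphism trick with the contraction supplying the homotopy.
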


\begin{proof}
Assume that $\varphi'_\bullet=\left(
                      \begin{array}{cc}
                       \beta_\bullet & \alpha_\bullet \\
                       \delta_\bullet & \gamma_\bullet\\
                      \end{array}
                    \right):X'_\bullet\oplus X_\bullet\rightarrow Y_\bullet\oplus Y'_\bullet$, then $\varphi_\bullet=\varphi'_\bullet\left(
                                                                                                                                       \begin{array}{cc}
                                                                                                                                         0 & 1 \\
                                                                                                                                         1 & 0 \\
                                                                                                                                       \end{array}
                                                                                                                                     \right)
                    $. Thus we have $$C(\varphi_\bullet)\cong C(\varphi'_\bullet)\cong X'_\bullet[1]\oplus C(\alpha_\bullet)\oplus Y'_\bullet,$$
where the first isomorphism follows from Lemma \ref{1.2} and the second isomorphism uses Lemma \ref{1.3} twice.
\end{proof}

\section{Main theorem}

In this section, we first recall the construction of idempotent completion of an additive category and some related facts from \cite{[BS],[B]}, then state and prove our main theorem.

An additive category $\mathcal{C}$ is called {\em idempotent complete} if for each object $A$ in $\mathcal{C}$ and for each idempotent $e:A\rightarrow A$, we have $A=\mbox{Im}(e)\oplus \mbox{Ker}(e)$.

Let $\mathcal{C}$ be an additive category. The {\em idempotent completion} of $\mathcal{C}$ is the category $\widetilde{\mathcal{C}}$ defined as follows. Objects of $\mathcal{\widetilde{C}}$ are pairs $(A,e)$, where $A$ is an object in $\mathcal{C}$ and $e: A\rightarrow A$ is an idempotent. A morphism in $\widetilde{\mathcal{C}}$ from $(A,e)$ to $(B,f)$ is in the form of $fpe: A\rightarrow B$ for some morphism $p:A\rightarrow B$ in $\mathcal{C}$.

Assume that $\mathcal{D}$ is another additive category. An additive functor $F:\mathcal{C}\rightarrow \mathcal{D}$ yields an additive  functor $\widetilde{F}:\mathcal{\widetilde{C}}\rightarrow \mathcal{\widetilde{D}}$, by setting $\widetilde{F}(A,e)=(FA,Fe)$ and $\widetilde{F}(fp e)=F(f)F(p)F(e)$. Suppose $G:\mathcal{D}\rightarrow\mathcal{E}$ is another additive functor, then it is clear that $\widetilde{GF}=\widetilde{G}\widetilde{F}$.

Given two additive functors $F,H:\mathcal{C}\rightarrow\mathcal{D}$, a natural transformation $\alpha: F\rightarrow H$ yields a unique natural transformation $\widetilde{\alpha}:\widetilde{F}\rightarrow\widetilde{H}$, by putting $\widetilde{\alpha}_{(A,e)}=H(e)\alpha_AF(e)$.

 The assignment $A\mapsto (A,1)$ defines an additive functor $\iota: \mathcal{\mathcal{C}}\rightarrow\widetilde{\mathcal{C}}$. It is easy to see that $\iota F=\widetilde{F}\iota$, where $F:\mathcal{C}\rightarrow \mathcal{D}$ is an additive functor. The following is well known.

 \begin{prop}\label{2.1}
 The category $\widetilde{\mathcal{C}}$ is an idempotent complete  additive category and the functor $\iota: \mathcal{\mathcal{C}}\rightarrow\widetilde{\mathcal{C}}$ is fully faithful. In particular, $\mathcal{C}$ is idempotent complete if and only if $\iota:\mathcal{C}\rightarrow \widetilde{\mathcal{C}}$ is an equivalence.  Moreover, given an idempotent complete additive category $\mathcal{D}$ and an additive functor $F:\mathcal{C}\rightarrow\mathcal{D}$, there exists a unique additive functor $G:\widetilde{\mathcal{C}}\rightarrow \mathcal{D}$ such that $F=G\iota$.
 \end{prop}

\begin{rem}
Since the functor $\iota: \mathcal{\mathcal{C}}\rightarrow\widetilde{\mathcal{C}}$ is fully faithful, for convenience we view $\mathcal{C}$ as a full subcategory of $\widetilde{\mathcal{C}}$. Thus for each object $X\in\widetilde{\mathcal{C}}$, there exists an object $X'\in\widetilde{\mathcal{C}}$ such that $X\oplus X'\in\mathcal{C}$. In fact, if $X=(A,e)$, then we can take $X'=(A,1-e)$ and $X\oplus X'\cong A\in\mathcal{C}$.
\end{rem}


Let $(\mathcal{C},\Sigma,\Theta)$ and $(\mathcal{C}',\Sigma',\Theta')$ be two $n$-angulated categories. An additive functor $F:\mathcal{C}\rightarrow \mathcal{C}'$ is called $n$-$angulated$ (see \cite{[BT2]}) if there exists a natural isomorphism $\alpha:F\Sigma\rightarrow\Sigma'F$ and $F$ preserves $n$-angles: if
$X_1\xrightarrow{f_1}X_2\xrightarrow{f_2}\cdots\xrightarrow{f_{n-1}}X_n\xrightarrow{f_n}\Sigma X_1$
is an $n$-angle in $\Theta$, then
$FX_1\xrightarrow{Ff_1}FX_2\xrightarrow{Ff_2}\cdots\xrightarrow{Ff_{n-1}}FX_n\xrightarrow{\alpha_{X_1}Ff_n}\Sigma' FX_1$
is an $n$-angle in $\Theta'$.

We are now in a position to state and prove our main result.

\begin{thm}\label{thm}
Let $(\mathcal{C},\Sigma,\Theta)$ be an $n$-angulated category. Then its idempotent completion $\widetilde{\mathcal{C}}$ admits a unique $n$-angulated structure such that the natural functor $\iota:\mathcal{C}\rightarrow \widetilde{\mathcal{C}}$ is $n$-angulated. Moreover, given an idempotent complete $n$-angulated category $\mathcal{D}$ and an $n$-angulated functor $F:\mathcal{C}\rightarrow\mathcal{D}$, there exists a unique $n$-angulated functor $G:\widetilde{\mathcal{C}}\rightarrow \mathcal{D}$ such that $F=G\iota$.
\end{thm}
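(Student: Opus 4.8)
The plan is to follow the Balmer--Schlichting strategy for triangulated categories, transferring it to the $n$-angulated setting using the lemmas of Section 2 to overcome the two obstructions mentioned in the introduction (failure of idempotent-lifting for morphisms of $n$-angles, and failure of uniqueness of the cone). First I would define the candidate class $\widetilde{\Theta}$ of $n$-$\Sigma$-sequences in $\widetilde{\mathcal{C}}$ to consist of all those that are direct summands (in the category of $n$-$\Sigma$-sequences) of images $\iota(X_\bullet)$ of $n$-angles $X_\bullet\in\Theta$; equivalently, an $n$-$\Sigma$-sequence in $\widetilde{\mathcal{C}}$ lies in $\widetilde{\Theta}$ iff adding a contractible sequence turns it into (an isomorphic copy of) a sequence coming from $\Theta$. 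One first checks that $\Sigma$ extends to an automorphism $\widetilde{\Sigma}$ of $\widetilde{\mathcal{C}}$ by $\widetilde{\Sigma}(A,e)=(\Sigma A,\Sigma e)$, and that $\iota\Sigma=\widetilde{\Sigma}\iota$, so that $\iota$ is automatically compatible with suspension and $n$-angles in $\Theta$ map into $\widetilde{\Theta}$.

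Next I would verify the axioms (N1)--(N4) for $(\widetilde{\mathcal{C}},\widetilde{\Sigma},\widetilde{\Theta})$. Axiom (N1)(a), closure under direct summands and isomorphisms, is essentially built into the definition of $\widetilde{\Theta}$. For (N1)(b), that identity-type trivial $n$-angles lie in $\widetilde{\Theta}$, and for (N2)--(N3), rotation and the filling-in of morphisms, the point is that every object of $\widetilde{\mathcal{C}}$ is a summand of an object of $\mathcal{C}$, so any sequence or morphism in $\widetilde{\mathcal{C}}$ can be completed to one in $\mathcal{C}$ by padding with a complementary object, to which the corresponding axiom in $\mathcal{C}$ applies, and then one descends back to the summand. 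The genuinely delicate point of (N1) is (N1)(c): given a morphism $u\colon X_1\to X_2$ in $\widetilde{\mathcal{C}}$, one must produce an $n$-angle in $\widetilde{\Theta}$ starting with $u$. Here is where the idempotent-splitting obstruction appears: one builds an $n$-angle on $u\oplus(\text{complement})$ inside $\mathcal{C}$, then uses Lemma~\ref{1.1} to split off the extraneous summand in the second term, and iterates; Lemma~\ref{1.1}(1) and (2) are exactly the tools that let one peel off a summand $Y_2$ sitting in the second position, which is what makes the construction of the $n$-angle on $u$ alone possible.

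The main obstacle, as flagged in the introduction, is the octahedral-type axiom (N4). In the triangulated case its proof rests on uniqueness of the cone, which fails for $n$-angles. The plan is to replace that uniqueness argument by the mapping-cone calculus developed in Lemmas~\ref{1.2}, \ref{1.3} and \ref{1.4}. Concretely, given the (N4)-data in $\widetilde{\mathcal{C}}$, I would lift everything to $\mathcal{C}$ by adjoining complementary contractible summands, apply (N4) in $\mathcal{C}$ to obtain the required commutative diagram and the exactness of the total mapping cone there, and then transport the conclusion back down to $\widetilde{\mathcal{C}}$. The role of Lemma~\ref{1.4} is decisive: it shows that adjoining contractible summands $X'_\bullet$ and $Y'_\bullet$ changes the mapping cone only by the predictable summand $X'_\bullet[1]\oplus Y'_\bullet$, and Lemma~\ref{1.2} guarantees that the relevant weak isomorphisms preserve the property of being an $n$-angle. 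Thus the mapping-cone being an $n$-angle in $\widetilde{\Theta}$ can be detected after padding, which is precisely the substitute for uniqueness of the cone.

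Finally I would establish the universal property and uniqueness of the structure. Uniqueness of the $n$-angulated structure making $\iota$ an $n$-angulated functor follows because any such $\widetilde{\Theta}$ must contain $\iota(\Theta)$, be closed under summands and isomorphisms by (N1)(a), and conversely every element must become an $n$-angle from $\mathcal{C}$ after padding, forcing $\widetilde{\Theta}$ to be exactly the class defined above. For the universal property, by Proposition~\ref{2.1} the additive functor $F\colon\mathcal{C}\to\mathcal{D}$ extends uniquely to an additive $G\colon\widetilde{\mathcal{C}}\to\mathcal{D}$ with $F=G\iota$; the natural isomorphism $F\Sigma\cong\Sigma' F$ extends via the $\widetilde{(\,\cdot\,)}$ construction recalled before Proposition~\ref{2.1} to a natural isomorphism $G\widetilde{\Sigma}\cong\Sigma_{\mathcal{D}}G$, and $G$ preserves $\widetilde{\Theta}$ because an element of $\widetilde{\Theta}$ becomes, after padding with a contractible sequence, the image under $\iota$ of an $n$-angle in $\mathcal{C}$, which $F$ sends to an $n$-angle in $\mathcal{D}$; since $\mathcal{D}$ is idempotent complete, its $n$-angles are closed under summands, and the contractible padding is carried by $G$ to a contractible sequence, so the summand $G$-image is again an $n$-angle. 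This shows $G$ is $n$-angulated, completing the proof.
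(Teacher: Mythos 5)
Your proposal follows the paper's proof essentially step for step in the construction and in the verification of the axioms: the same class $\widetilde{\Theta}$ of direct summands of $n$-angles from $\Theta$, the same padding-by-complements arguments for (N1)(b), (N2) and (N3), Lemma \ref{1.1} to peel off the extraneous summands for (N1)(c), Lemmas \ref{1.2} and \ref{1.4} to control the mapping cone for (N4), and Proposition \ref{2.1} plus padding for the universal property. The one place where your argument has a genuine gap is the uniqueness of the $n$-angulated structure.

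You assert that for any $n$-angulation $\widetilde{\Theta}'$ making $\iota$ $n$-angulated, ``every element must become an $n$-angle from $\mathcal{C}$ after padding, forcing $\widetilde{\Theta}'$ to be exactly the class defined above.'' This is unjustified, and it is exactly where the non-uniqueness of cones bites. Padding an element $X_\bullet\in\widetilde{\Theta}'$ with a contractible complement produces a sequence whose objects and morphisms lie in $\mathcal{C}$, but nothing forces that padded sequence to be isomorphic to an element of $\Theta$: the only comparison map available is obtained from (N3) by completing the pair $(1,1)$ against an angle of $\Theta$ over the same first morphism, and for $n>3$ such a completion is in general only a weak isomorphism, not an isomorphism --- precisely the failure flagged in the introduction. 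The same problem already taints your side remark that $\widetilde{\Theta}$ is ``equivalently'' the class of sequences that become isomorphic to elements of $\iota(\Theta)$ after adding a contractible summand: in the paper, contractible padding yields only the weak isomorphisms $\psi_\bullet$ and $\theta_\bullet$ used in the (N4) step, never an isomorphism. The paper closes the uniqueness gap differently: from $\iota(\Theta)\subseteq\widetilde{\Theta}'$ and closure under direct summands one gets the inclusion $\widetilde{\Theta}\subseteq\widetilde{\Theta}'$, and then one invokes \cite[Proposition 2.5(c)]{[GKO]} --- no (pre-)$n$-angulation of a fixed $(\widetilde{\mathcal{C}},\widetilde{\Sigma})$ can be properly contained in another, a maximality statement whose proof rests on exactness of $n$-angles and \cite[Lemma 2.4]{[GKO]} --- to conclude $\widetilde{\Theta}=\widetilde{\Theta}'$. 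You need this result, or an equivalent exactness argument, to finish the uniqueness claim; the remainder of your outline is sound.
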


\begin{proof} The isomorphism $\Sigma$ of $\mathcal{C}$ induces an isomorphism $\widetilde{\Sigma}$ of $\widetilde{\mathcal{C}}$, by setting $\widetilde{\Sigma}(A,e)=(\Sigma A,\Sigma e)$. Denote by $\widetilde{\Theta}$ the idempotent completion of $\Theta$. Here we only care about the objects of $\widetilde{\Theta}$. We first show that $(\widetilde{\mathcal{C}},\widetilde{\Sigma},\widetilde{\Theta})$ is an $n$-angulated category.

(N1)(a) It is clear that $\widetilde{\Theta}$ is closed under isomorphisms, direct sums and direct summands.

(N1)(b) Let $X$ be an object in $\widetilde{\mathcal{C}}$. There exists an object $X'\in\widetilde{\mathcal{C}}$ such that $X\oplus X'\in \mathcal{C}$.
The trivial $n$-angle $X\oplus X'\xrightarrow{1}X\oplus X'\rightarrow 0\rightarrow\cdots\rightarrow 0\rightarrow\Sigma (X\oplus  X')\in\Theta$ implies that
$X\xrightarrow{1}X\rightarrow 0\rightarrow\cdots\rightarrow 0\rightarrow\widetilde{\Sigma} X\in\widetilde{\Theta}$.

(N2) It follows from the definition.

(N3) Given a commutative diagram
$$\begin{gathered}\xymatrix{
X_1 \ar[r]^{f_1}\ar[d]^{\varphi_1} & X_2 \ar[r]^{f_2}\ar[d]^{\varphi_2} & X_3 \ar[r]^{f_3} & \cdots \ar[r]^{f_{n-1}}& X_n \ar[r]^{f_n} & \widetilde{\Sigma} X_1 \ar[d]^{\widetilde{\Sigma} \varphi_1}\\
Y_1 \ar[r]^{g_1} & Y_2 \ar[r]^{g_2} & Y_3 \ar[r]^{g_3} & \cdots \ar[r]^{g_{n-1}} & Y_n \ar[r]^{g_n}& \widetilde{\Sigma} Y_1\\
}\end{gathered}\eqno{(3.1)}$$ with rows in $\widetilde{\Theta}$. There exist two objects $X'_\bullet$ and $Y'_\bullet$ in $\widetilde{\Theta}$ such that $X_\bullet\oplus X_\bullet', Y_\bullet\oplus Y_\bullet'\in\Theta$. Thus we have four canonical morphisms of $n$-$\widetilde{\Sigma}$-sequences $i_\bullet: X_\bullet\rightarrow X_\bullet\oplus X_\bullet'$, $j_\bullet:Y_\bullet\rightarrow Y_\bullet\oplus Y'_\bullet$, $p_\bullet:X_\bullet\oplus X_\bullet'\rightarrow X_\bullet$ and $q_\bullet:Y_\bullet\oplus Y'_\bullet\rightarrow Y_\bullet$ such that $p_\bullet i_\bullet=1$ and $q_\bullet j_\bullet=1$. By (N3), we can complete the pair $(j_1\varphi_1p_1,j_2\varphi_2p_2)$ to a morphism $\widetilde{\varphi}_\bullet:X_\bullet\oplus X_\bullet'\rightarrow Y_\bullet\oplus Y_\bullet'$. It is easy to see that $q_\bullet\widetilde{\varphi}_\bullet i_\bullet: X_\bullet\rightarrow Y_\bullet$ is a morphism of $n$-$\widetilde{\Sigma}$-sequences extending $(\varphi_1,\varphi_2)$.

(N1)(c) For each morphism $f_1:X_1\rightarrow X_2$ in $\widetilde{\mathcal{C}}$, we choose two objects $X_1',X_2'\in\widetilde{\mathcal{C}}$ such that $\left(
                                                                                                                                                  \begin{smallmatrix}
                                                                                                                                                    f_1 & 0 \\
                                                                                                                                                    0 & 0 \\
                                                                                                                                                  \end{smallmatrix}
                                                                                                                                                \right):X_1\oplus
X_1'\rightarrow X_2\oplus X_2'$ is a morphism in $\mathcal{C}$. By (N1)(c), we assume that
$$\widetilde{X}_\bullet=(X_1\oplus X_1'\xrightarrow{\left(
 \begin{smallmatrix}
 f_1 & 0 \\
  0 & 0 \\
  \end{smallmatrix}
  \right)}X_2\oplus X_2'\xrightarrow{f_2} X_3\xrightarrow{f_3}\cdots\xrightarrow{f_{n-1}}X_n\xrightarrow{f_n} \Sigma (X_1\oplus X_1'))$$
is an $n$-angle in $\Theta$. Then $\widetilde{X}_\bullet\in\widetilde{\Theta}$. Lemma \ref{1.1} implies that $\widetilde{X}_\bullet\cong X_\bullet\oplus X'_\bullet$, where
 $$X_\bullet=(X_1\xrightarrow{f_1}X_2\xrightarrow{f_2'} X_3'\xrightarrow{f_3'} X_4\xrightarrow{f_4}\cdots\xrightarrow{f_{n-2}} X_{n-1}\xrightarrow{f_{n-1}'} X_n'\xrightarrow{f_n'}\widetilde{\Sigma} X_1),$$
 $$X'_\bullet=(X_1'\xrightarrow{0} X_2'\xrightarrow{1} X_2'\rightarrow 0\rightarrow\cdots\rightarrow 0\rightarrow \widetilde{\Sigma} X'_1\xrightarrow{1}\widetilde{\Sigma} X'_1).$$
So $X_\bullet$ belongs to $\widetilde{\Theta}$.

Therefore, $(\widetilde{\mathcal{C}},\widetilde{\Sigma},\widetilde{\Theta})$ is a pre-$n$-angulated category.

(N4) Given a commutative diagram (3.1) with rows in $\widetilde{\Theta}$.
For $i=1,2$, we choose $X_i',Y_i'\in \widetilde{\mathcal{C}}$ such that $\left(
                                                                                                                                                  \begin{smallmatrix}
                                                                                                                                                    f_1 & 0 \\
                                                                                                                                                    0 & 0 \\
                                                                                                                                                  \end{smallmatrix}
                                                                                                                                                \right):X_1\oplus
X_1'\rightarrow X_2\oplus X_2'$ and $\left(
                                                                                                                                                  \begin{smallmatrix}
                                                                                                                                                    g_1 & 0 \\
                                                                                                                                                    0 & 0 \\
                                                                                                                                                  \end{smallmatrix}
                                                                                                                                                \right):Y_1\oplus
Y_1'\rightarrow Y_2\oplus Y_2'$ are morphisms in $\mathcal{C}$.  By axiom (N1)(c), we can assume that $\widetilde{X}_\bullet$ and $\widetilde{Y}_\bullet$ are $n$-angles in $\Theta$ such that the first morphisms are $\left(
                                                                                                                                                  \begin{smallmatrix}
                                                                                                                                                    f_1 & 0 \\
                                                                                                                                                    0 & 0 \\
                                                                                                                                                  \end{smallmatrix}
                                                                                                                                                \right)$
and $\left(
                                                                                                                                                  \begin{smallmatrix}
                                                                                                                                                    g_1 & 0 \\
                                                                                                                                                    0 & 0 \\
                                                                                                                                                  \end{smallmatrix}
                                                                                                                                                \right)$
respectively.  By (N4), the pair $(\left(
                      \begin{smallmatrix}
                       \varphi_1 & 0 \\
                       0 & 0 \\
                      \end{smallmatrix}
                    \right), \left(
                      \begin{smallmatrix}
                       \varphi_2 & 0 \\
                       0 & 0 \\
                      \end{smallmatrix}
                    \right))$ can be completed to a morphism 
 $\widetilde{\varphi}_\bullet: \widetilde{X}_\bullet\rightarrow \widetilde{Y}_\bullet$ such that the mapping cone $ C(\widetilde{\varphi}_\bullet)\in\Theta$.
Assume that $$X'_\bullet=(X_1'\xrightarrow{0} X_2'\xrightarrow{1} X_2'\rightarrow 0\rightarrow\cdots\rightarrow 0\rightarrow \widetilde{\Sigma} X'_1\xrightarrow{1}\widetilde{\Sigma} X'_1),$$
$$Y'_\bullet=(Y_1'\xrightarrow{0} Y_2'\xrightarrow{1} Y_2'\rightarrow 0\rightarrow\cdots\rightarrow 0\rightarrow \widetilde{\Sigma} Y'_1\xrightarrow{1}\widetilde{\Sigma} Y'_1).$$
 Then both $X'_\bullet$ and $Y'_\bullet$ are contractible $n$-angles in $\widetilde{\Theta}$. By (N3) we have two weak isomorphisms $\psi_\bullet: X_\bullet\oplus X_\bullet'\rightarrow\widetilde{X}_\bullet$ and $\theta_\bullet: \widetilde{Y}_\bullet\rightarrow Y_\bullet\oplus Y'_\bullet$. Since $C(\widetilde{\varphi}_\bullet)\in\Theta\subseteq\widetilde{\Theta}$, it follows from Lemma \ref{1.2} and its dual that the mapping cone $C(\theta_\bullet\widetilde{\varphi}_\bullet\psi_\bullet)\in\widetilde{\Theta}$.
Suppose that $\theta_\bullet\widetilde{\varphi}_\bullet\psi_\bullet=\left(
                                             \begin{array}{cc}
                                               \alpha_\bullet & \beta_\bullet \\
                                               \gamma_\bullet & \delta_\bullet \\
                                             \end{array}
                                           \right):X_\bullet\oplus X_\bullet'\rightarrow Y_\bullet\oplus Y_\bullet'
$, then it is easy to check that $\alpha_\bullet: X_\bullet\rightarrow Y_\bullet$ is a morphism of $n$-$\widetilde{\Sigma}$-sequences where $\alpha_i=\varphi_i$ for $i=1,2$. Since $C(\theta_\bullet\widetilde{\varphi}_\bullet\psi_\bullet)\cong C(\alpha_\bullet)\oplus X_\bullet'[1]\oplus Y_\bullet'$ by Lemma \ref{1.4},  we obtain that $C(\alpha_\bullet)\in\widetilde{\Theta}$. So (N4) holds. Consequently, $(\widetilde{\mathcal{C}},\widetilde{\Sigma},\widetilde{\Theta})$ is an $n$-angulated category.

It is clear that the functor $\iota:\mathcal{C}\rightarrow\widetilde{\mathcal{C}}$ is $n$-angulated. Suppose that $\widetilde{\Theta}'$ is another $n$-angulation of $(\widetilde{\mathcal{C}},\widetilde{\Sigma})$ such that the functor $\iota$ is $n$-angulated. Since $\iota$ is $n$-angulated and $\widetilde{\Theta}'$ is closed under direct summands, we infer that $\widetilde{\Theta}\subseteq\widetilde{\Theta}'$. It follows from \cite[Propositon 2.5 (c)]{[GKO]} that $\widetilde{\Theta}=\widetilde{\Theta}'$. This proves the uniqueness of the $n$-angulated structure.

Given an idempotent complete $n$-angulated category $(\mathcal{D},\Sigma',\Phi)$ and an $n$-angulated functor $F:\mathcal{C}\rightarrow\mathcal{D}$, by Proposition \ref{2.1} there exists a unique additive functor $G:\widetilde{\mathcal{C}}\rightarrow \mathcal{D}$ such that $F=G\iota$. We only need to show that the functor $G$ is $n$-angulated. 

Assume that $\alpha:F\Sigma\rightarrow\Sigma' F$ is the natural isomorphism given by the $n$-angulated functor $F$.
We define $\widetilde{\alpha}_{(A,e)}=\Sigma'F(e)\cdot\alpha_{A}\cdot F\Sigma(e)$ for each $(A,e)\in\widetilde{C}$, then  $\widetilde{\alpha}: G\widetilde{\Sigma}\rightarrow\Sigma' G$ is a natural isomorphism. 
 Suppose that
$$X_\bullet=(X_1\xrightarrow{f_1} X_2\xrightarrow{f_2} \cdots\xrightarrow{f_{n-1}} X_n\xrightarrow{f_n} \widetilde{\Sigma} X_1)$$
 is an $n$-angle in $\widetilde{\Theta}$, there exists
another $n$-angle $$X'_\bullet=(X'_1\xrightarrow{f'_1} X'_2\xrightarrow{f'_2} \cdots\xrightarrow{f'_{n-1}} X'_n\xrightarrow{f'_n} \widetilde{\Sigma} X'_1)$$
in $\widetilde{\Theta}$ such that $X_\bullet\oplus X'_\bullet\in\Theta$.  Since $F=G\iota$ is $n$-angulated,  the following sequence $$F(X_1\oplus X_1')\xrightarrow{F\left(
                                                                                     \begin{smallmatrix}
                                                                                       f_1 & 0 \\
                                                                                       0 & f_1' \\
                                                                                     \end{smallmatrix}
                                                                                   \right)} F(X_2\oplus X_2')\xrightarrow{F\left(
                                                                                     \begin{smallmatrix}
                                                                                       f_2 & 0 \\
                                                                                       0 & f_2' \\
                                                                                     \end{smallmatrix}
                                                                                   \right)} \cdots$$
$$\cdots\xrightarrow{F\left(
                                                                                     \begin{smallmatrix}
                                                                                       f_{n-1} & 0 \\
                                                                                       0 & f_{n-1}' \\
                                                                                     \end{smallmatrix}
                                                                                   \right)} F(X_n\oplus X_n')\xrightarrow{\alpha_{X_1\oplus X_1'}F\left(
                                                                                     \begin{smallmatrix}
                                                                                       f_n & 0 \\
                                                                                       0 & f_n' \\
                                                                                     \end{smallmatrix}
                                                                                   \right)} \Sigma' F(X_1\oplus X_1')$$ is an $n$-angle in $\Phi$.
Because $\mathcal{D}$ is idempotent complete,
we can easily infer that
$$GX_1\xrightarrow{Gf_1} GX_2\xrightarrow{Gf_2} \cdots\xrightarrow{Gf_{n-1}} GX_n\xrightarrow{\widetilde{\alpha}_{X_1}Gf_n} \Sigma' GX_1$$ is an $n$-angle in $\Phi$.
This completes the whole proof.
\end{proof}

\vspace{2mm}\noindent {\bf Acknowledgements}  The work was done when the author visited University of Stuttgart. The author thanks Steffen Koenig for warm hospitality during his stay in Stuttgart and for helpful discussions and remarks, especially in  writing  the  introduction.

\end{document}